
\documentclass{amsart}
\usepackage{amssymb}
\input xy
\xyoption{all}

\setlength{\textheight}{43pc}
\setlength{\textwidth}{28pc}


\newcommand{\B}{\mathbb{B}}
\newcommand{\C}{\mathbb{C}}
\newcommand{\D}{\mathbb{D}}

\newcommand{\N}{\mathbb{N}}

\newcommand{\Z}{\mathbb{Z}}
\renewcommand{\P}{\mathbb{P}}
\newcommand{\R}{\mathbb{R}}

\newcommand{\bT}{\mathbb{T}}

\renewcommand{\d}{\mathrm{d}}

\newcommand\E{\mathrm{e}}
\newcommand\I{\mathrm{i}}

\newcommand{\cA}{\mathcal{A}}

\newcommand{\cC}{\mathcal{C}}

\newcommand{\cL}{\mathcal{L}}

\newcommand{\cO}{\mathcal{O}}

\newcommand{\cP}{\mathcal{P}}

\newcommand\wt{\widetilde}

\newcommand{\Psh}{\mathrm{Psh}}
\newcommand{\supp}{\mathrm{supp}\,}

\newcommand{\ol}{\overline}

\newcommand\wh{\widehat}

\newcommand\bs{\backslash}

\newcommand\di{\partial}

\newcommand\dist{\mathrm{dist}}
\newcommand\dibar{\bar\partial}

\newtheorem{theorem}{Theorem}[section]

\newtheorem{corollary}[theorem]{Corollary}
\newtheorem{proposition}[theorem]{Proposition}

\theoremstyle{definition}
\newtheorem{definition}[theorem]{Definition}

\newtheorem{problem}[theorem]{Problem}
\newtheorem{remark}[theorem]{Remark}

\numberwithin{equation}{section}


\begin{document}
\title[Projective Hulls and Analytic Discs]  
{Characterizations of Projective Hulls by Analytic Discs}        
\author{Barbara Drinovec Drnov\v sek \& Franc Forstneri\v c}
\address{Faculty of Mathematics and Physics, University of Ljubljana, 
and Institute of Mathematics, Physics and Mechanics, Jadranska 19, 
1000 Ljubljana, Slovenia}
\email{barbara.drinovec@fmf.uni-lj.si}
\email{franc.forstneric@fmf.uni-lj.si}
\thanks{Research supported by grants P1-0291 and J1-2152, Republic of Slovenia.}

%
%
%
%

\subjclass{Primary 32U05; Secondary 32H02, 32E10}   
\date{\today} 
\keywords{Analytic discs, projective hulls, plurisubharmonic function, currents}

%
%
%
%
\begin{abstract}
The notion of the projective hull of a compact set in a
complex projective space $\P^n$ was introduced by
Harvey and Lawson in 2006. 
In this paper we describe the projective hull 
by Poletsky sequences of analytic discs, in 
analogy to the known descriptions of the holomorphic
and the plurisubharmonic hull.
\end{abstract}

\maketitle

{\small \rightline{\em  Dedicated to John P.\ D'Angelo on the occasion of his 60th birthday}}


\section{Projective hulls}
\label{sec:projective}
Given a compact set $K$ in the complex projective space $\P^n$, 
Harvey and Lawson \cite{Harvey-Lawson2006} 
introduced its {\em projective hull} $\wh K_{\P^n}$ 
as the set of all points $x\in \P^n$ for which there exists 
a constant $C(x) <+\infty$ satisfying
\begin{equation}
\label{eq:Cx}
    |\cP(x)| \le C(x)^d \sup_K |\cP|
\end{equation}
for all holomorphic sections $\cP$ of the line bundle 
$\cO_{\P^n}(d)$ and all integers $d>0$. 
Their principal motivation was to extend 
the notion of the polynomial hull and to generalize 
Wermer's classical theorem \cite{Wermer} 
that the polynomial hull of a closed real analytic curve 
$\gamma\subset \C^n$ is either $\gamma$ itself, 
or a complex curve with boundary $\gamma$.

In this note we give several descriptions  
of projective hulls by sequences of analytic discs, 
in analogy to Poletsky's description of the polynomial hull
\cite{Poletsky1993}. We also characterize
the polynomial hull of a compact connected 
circular set $K$ in $\C^n$ by analytic discs 
that have the entire boundary circle close to $K$ 
(see Theorem \ref{thm:hull-circular}.)

We begin by  recalling some of the main results from the 
paper \cite{Harvey-Lawson2006} by Harvey and Lawson,
with emphasis on those that are used in this paper.
In the sequel we let $C_K(x)\ge 1$ be the smallest constant 
satisfying (\ref{eq:Cx}) and set $C_K(x)=+\infty$ for $x\in\P^n\setminus \wh K_{\P^n}$;
this gives the {\em best constant function} $C_K\colon \P^n \to [1,+\infty]$.
 
There is a simple description of the projective hull 
in terms of the polynomial hull.
Let $\pi\colon \C^{n+1}_*=\C^{n+1}\setminus \{0\}\to \P^n$ be the 
standard projection taking a point $z=(z_0,\ldots,z_n)\in \C^{n+1}_*$
onto the point $x=\pi(z)\in \P^n$ with homogeneous coordinates 
$[z_0:\cdots:z_n]$. Denote by $L_x=\pi^{-1}(x)\cup\{0\}$ 
the complex line in $\C^{n+1}$ over $x$.
Let $\B=\{z\in \C^{n+1}\colon |z|<1\}$ 
and $S=b\B=\{z\in \C^{n+1}\colon |z|= 1\}$.
Given a compact set $K\subset \P^n$, we define
compact subsets $S_K\subset B_K$ of $\C^{n+1}$ by
\begin{equation}
\label{eq:BK}
	B_K=\ol \B \cap (\pi^{-1}(K)\cup \{0\}),\quad
	S_K=S\cap \pi^{-1}(K).
\end{equation}
Clearly these two sets have the same polynomial hull in $\C^{n+1}$,
$\wh B_K=\wh S_K$. The intersection of $\wh B_K$ with the complex
line $L_x$ over any point $x\in \P^n$ is a closed disc 
$\triangle_x$ in $L_x$ of radius $r(x)\ge 0$ centered at $0$.
Then we have (see \cite[Proposition 5.2]{Harvey-Lawson2006}) 
\begin{equation}
\label{eq:hull2}
	 \wh K_{\P^n}= \{x\in \P^n\colon r(x)>0\} 
	 \quad \text{and} \quad 
	 C_K(x) = \frac{1}{r(x)},
\end{equation}
where $C_K$ is the best constant function from (\ref{eq:Cx}).
Equivalently: 
\begin{equation}
\label{eq:SK}
	\wh K_{\P^n}=\pi\left(\wh S_K \setminus \{0\}\right).
\end{equation}
This follows by observing that sections of $\cO_{\P^n}(d)$ 
correspond to homogeneous polynomials of degree $d$
on $\C^{n+1}$. Indeed, let us denote by $L=\cO_{\P^n}(-1)$
the universal line bundle over $\P^n$. The total space of $L$
is $\C^{n+1}$ blown up at the origin, with the zero section
$L_0\cong\P^{n}$ corresponding to the exceptional fiber,
and $L\setminus L_0\cong \C^{n+1}_*$. A homogeneous polynomial
of degree $d$ on $\C^{n+1}$ defines a linear functional on the $d$-th tensor
power $L^{\otimes d} \cong \cO_{\P^n}(-d)$ of $L$,  
and hence a holomorphic section of the dual bundle 
$(L^{\otimes d})^* \cong \cO_{\P^n}(d)$.
This shows that the projective hull equals the set
(\ref{eq:SK}) if we replace the polynomial hull of $S_K$ by the
(ostensibly larger) hull obtained by homogeneous polynomials.
Since $S_K$ is circular, these two hulls coincide 
\cite[Proposition 5.4]{Harvey-Lawson2006}.

Another characterization of the projective hull is given 
in terms of the extremal function of $K$ with respect to the family
$\Psh_\omega(\P^n)$ of all upper semicontinuous functions
$v\colon \P^n\to \R\cup\{-\infty\}$ satisfying 
%
%
$\d\d^c v + \omega= 2\I \di\dibar v + \omega\ge 0$,
where $\omega=\omega_{FS}$ is the Fubini-Study form on $\P^n$. 
(Here $\d^c=\I(\dibar -\di)$.) Setting 
\[
	V^\omega_K = \sup\{ v\in \Psh_\omega(\P^n)\colon v\le 0 \ \text{on}\ K\},
\]
we have that
\begin{equation}
\label{eq:VK}
	\wh K_{\P^n}= \{x\in \P^n\colon V^\omega_K(x) < +\infty\},
	\quad V_K^\omega(x)=\log C_K(x). 
\end{equation}
In particular, $\wh K_{\P^n} \ne \P^n$ if and only if $K$ is $\omega$-pluripolar,
i.e., there exists $v\in \Psh_\omega(\P^n)$ with $v\not\equiv -\infty$
and $K\subset \{ v=-\infty \}$.

%
The {\em Lelong class} $\cL_{\C^n}$ on $\C^n$ 
is the set of all plurisubharmonic functions $v\colon \C^n\to\R\cup\{-\infty\}$
for which there exist constants $r>0$ and $C\in \R$ 
(depending on $v$) such that
\[
	v(z) \le \log |z| + C, \qquad z\in \C^n, \ |z|>r.
\]
Given a subset $E\subset \C^n$, the
{\em Siciak-Zaharyuta extremal function} $V_E\colon\C^n\to \R\cup\{\infty\}$ (see \cite{Klimek})
is defined by 
\[
	V_E(z) = \sup\{v(z)\colon v\in\cL,\ v|_E\le 0\}.
\] 
If $K$ is a compact set in an affine chart
$\C^n \subset \P^n$, then a point $z \in \C^n$ belongs to 
$\wh K_{\P^n}$ if and only if $V_K(z)<+\infty$. 
This is seen by considering $\C^n$ as the hyperplane 
$\{z_0=1\} \subset \C^{n+1}$ and observing that a polynomial 
of degree $d$ on $\C^n$ corresponds 
to a homogeneous polynomial of degree $d$ on $\C^{n+1}$.
One also has a bijective correspondence 
between the Lelong class $\cL_{\C^n}$ and $\Psh_\omega(\P^n)$
(see \cite{Bedford-Taylor-1988} and \cite[Example 1.2]{Guedj-Zeriahi-2005}).
Explicitly, given $v\in \cL_{\C^n}$, the function  
\[
	\wt v(z_0,\ldots,z_n)= \log|z_0| + 
	v\left(\frac{z_1}{z_0},\ldots, \frac{z_1}{z_0}\right)
\]
is plurisubharmonic on $\C^{n+1}_*$; its restriction
to the unit sphere $S$ is circle invariant and hence 
defines a function $v'$ on $\P^n$. It is easily seen that
$v\mapsto v'$ is a bijective map of $\cL_{\C^n}$ onto
$\Psh_\omega(\P^n)$. 

Since the polynomial hull $\wh K:=\wh K_{\cO(\C^n)}$ of a compact set
$K\subset\C^n\subset\P^n$ equals 
$\{V_K=0\}$, it is contained in the projective hull.
Conversely, if $\wh K_{\P^n}$ lies in the complement 
$\Omega= \P^n\setminus \Lambda$ of
an algebraic hypersurface $\Lambda\subset \P^n$, then
$\wh K_{\P^n} = \wh K_{\cO(\Omega)}$
\cite[Corollary 12.7]{Harvey-Lawson2006}.
However, it is in general impossible to describe
the projective hull of a compact affine set $K\subset \P^n$
in terms of its polynomial hulls in affine subsets of $\P^n$
containing $K$. For example, there exists a smooth closed curve in
$\C^2$ whose polynomial hull is a holomorphic disc bounded by
the curve, but whose projective hull is $\P^2$
\cite[Remark 4.5]{Harvey-Lawson2006}.

We now describe our main results.
Let $\D=\{\zeta\in \C\colon |\zeta|< 1\}$ be the open unit disc 
in $\C$ and let $\bT=b\D=\{\zeta\in\C\colon |\zeta|=1\}$ be
its boundary circle. An {\em analytic disc} in a complex
space $X$ is a continuous map $f\colon\ol\D\to X$
which is holomorphic in $\D$. The point $f(0)$ is called
the {\em center} of the disc. We denote the space of all such
discs by $\cA_X$.

Our first characterization of the projective hull,
which applies to any compact set in $\P^n$, is in 
terms of Poletsky sequences of  discs in $\P^n$ 
that have a bounded lifting property with respect to the
projection $\pi\colon\C^{n+1}\setminus \{0\} \to\P^n$
(see Theorem \ref{thm:main}). By a {\em Poletsky sequence 
of discs}  $f_j$ (for a given compact set $K$) we mean that 
the set of points $\E^{\I t}\in \bT$ for which 
$\dist(f_j(\E^{\I t}),K)<1/j$ has measure $>2\pi - 1/j$. 
The proof uses Poletsky's theorem characterizing 
polynomial hulls by analytic discs.

The second characterization is motivated by a result of 
Lawson and Wermer \cite{Lawson-Wermer} 
in the case when $K$ is a simple closed curve; we simplify their 
proof and extend it to any compact connected set 
$K$ contained in an affine chart $\C^n=\P^n\setminus H$ of $\P^n$.
The projective hull of $K$ is the set of all centers 
of sequences of analytic discs $f_j$ in $\P^n$, with boundary circles 
$f_j(\bT)$ converging to $K$, such that a certain disc functional
$J$ is uniformly bounded on the sequence  
(see Theorem \ref{thm:LW}). The functional $J(f)$,
which was first introduced by L\'arusson and Sigurdsson in
\cite{Larusson-Sigurdsson2005}, is determined by 
the intersection divisor of the disc with the hyperplane at infinity;
see (\ref{eq:J}) for the explicit formula.

Perhaps the most interesting is our third characterization 
which applies to any compact {\em connected} set $K$ in $\P^n$
(see Theorem \ref{thm:main2}). 
Let $S_K$ be defined by (\ref{eq:BK}). Then a point $x\in \P^n$ 
belongs to the projective hull $\wh K_{\P^n}$ if and only if there is a point 
$0\ne p\in L_x=\pi^{-1}(x)$ over $x$ and a sequence of analytic discs 
$F_j\colon \ol\D\to \C^{n+1}\setminus \{0\}$ satisfying 
\[
	F_j(0)=p \ (\forall j\in \N), \qquad 
	\lim_{j\to\infty} \max_{t\in[0,2\pi]} \dist(F_j(\E^{\I t}),S_K)=0.
\]
The projected sequence of analytic discs 
$f_j=\pi\circ F_j\colon \ol\D\to \P^n$ 
clearly enjoys the bounded lifting property and also 
\[
	f_j(0)=x\ (\forall j\in \N), \qquad 
	\lim_{j\to\infty} \max_{t\in[0,2\pi]} \dist(f_j(\E^{\I t}),K)=0.
\]
The nontrivial addition when compared to Theorem \ref{thm:main} 
is that the entire circle $\bT$ is mapped arbitrarily close 
to the set $K$ (resp.\ to $S_K$). 

Both the second and the third characterization mentioned above
are based on a result of L\'arusson and Sigurdsson 
\cite{Larusson-Sigurdsson2005} which expresses 
the Siciak-Zahar\-yuta extremal function of a connected
open set in $\C^n$ as the envelope of a certain disc
functional.

\section{Plurisubharmonic hulls and analytic discs}
\label{sec:Poletsky}
Recall that the plurisubharmonic hull of a compact set $K$ 
in a complex space $X$ is defined by
\[
	\wh K_{\Psh(X)} = \{x\in X\colon v(x)\le \sup_K v,\ 
	\forall v\in  \Psh(X)\}.
\]
If $X$ is a Stein space, then $\wh K_{\Psh(X)}$ coincides with 
the holomorphic hull $\wh K_{\cO(X)}$; in particular, 
$\wh K_{\Psh(\C^n)} = \wh K_{\cO(\C^n)}$ 
is the polynomial hull of $K$. 
The following result is due to Poletsky \cite{Poletsky1993} 
in the basic case when $X$ is a domain in $\C^n$, 
to Rosay \cite{Rosay1,Rosay2} 
when $X$ is a complex manifold, and to the authors 
\cite[Corollary 1.4]{DF-Indiana} in the general case stated here. 
(We give a somewhat more precise formulation that will be used
in the sequel. Additional references can be found in \cite{DF-Indiana}.)

\begin{theorem}
\label{thm:Poletsky}
Assume that $X$ is a locally irreducible complex space
and $\dist_X$ is a distance function on $X$ inducing the 
standard topology. Let $K$ be a compact set in $X$ 
whose plurisubharmonic hull $\wh K_{\Psh(X)}$ is compact. 
Then a point $x\in X$ belongs to $\wh K_{\Psh(X)}$ if and only if 
for every open relatively com\-pact set $\Omega\Subset X$ containing 
$\wh K_{\Psh(X)}$ there is a sequence of analytic discs 
$f_j\in \cA_\Omega$ satisfying $f_j(0)=x$ $(j=1,2,\ldots)$ and
\begin{equation}
\label{eq:Psequence}
	\big| \left\{ t\in[0,2\pi] \colon 
	\dist_X \left(f_j(\E^{\I t}),K\right)< 1/j  \right\} 
	\big|  	> 2\pi - 1/j,\quad j\in \N.
\end{equation}
Here $|\cdotp|$ denotes the Lebesgue measure on $\R$.
\end{theorem}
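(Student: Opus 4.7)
The plan is to prove both implications; the sufficiency is a straightforward sub-mean value argument, while the necessity relies on the Poletsky--Rosay disc formula for locally irreducible complex spaces established by the authors in \cite{DF-Indiana}.

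For \emph{sufficiency}, fix $v\in\Psh(X)$ and set $M=\sup_K v$. Pick any $\Omega$ as in the statement; upper semicontinuity of $v$ on $\ol\Omega$ gives $M':=\sup_{\ol\Omega}v<+\infty$. Given $\epsilon>0$, choose $\delta>0$ with $v<M+\epsilon$ on $\{y\in X:\dist_X(y,K)<\delta\}$. Since $v\circ f_j$ is subharmonic on $\D$ and USC on $\ol\D$, the sub-mean value inequality yields
\[
	v(x)=v(f_j(0))\le\frac{1}{2\pi}\int_0^{2\pi}v(f_j(\E^{\I t}))\,\d t.
\]
For $j>1/\delta$, splitting the integral according to (\ref{eq:Psequence}) gives $v(x)\le(M+\epsilon)+M'/(2\pi j)$. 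Letting $j\to\infty$ and then $\epsilon\to 0$ yields $v(x)\le M$, so $x\in\wh K_{\Psh(X)}$.

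For \emph{necessity}, fix $x\in\wh K_{\Psh(X)}$ and an $\Omega$ as above. For each $\epsilon>0$ smaller than $\dist_X(K,X\setminus\Omega)$, let $u_\epsilon\colon X\to[0,1]$ be a continuous function vanishing on the $(\epsilon/2)$-neighborhood of $K$ and equal to $1$ off the $\epsilon$-neighborhood of $K$. The central input from \cite{DF-Indiana} is that the disc envelope
\[
	Pu_\epsilon(y)=\inf\left\{\frac{1}{2\pi}\int_0^{2\pi}u_\epsilon(f(\E^{\I t}))\,\d t
	\colon f\in\cA_\Omega,\ f(0)=y\right\}
\]
is plurisubharmonic on $\Omega$, and a comparison with $\Psh(X)$ made available by the compactness $\wh K_{\Psh(X)}\Subset\Omega$ shows that the hull hypothesis forces $Pu_\epsilon(x)\le 0$; nonnegativity then gives $Pu_\epsilon(x)=0$. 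By the definition of the infimum, for every $\delta>0$ there is $f\in\cA_\Omega$ with $f(0)=x$ and $\int_0^{2\pi}u_\epsilon(f(\E^{\I t}))\,\d t<\delta$, hence $|\{t:\dist_X(f(\E^{\I t}),K)\ge\epsilon\}|\le 2\pi\delta$. A diagonal choice $\epsilon=1/j$, $\delta=1/(4\pi j)$ then produces the required sequence $f_j$ satisfying (\ref{eq:Psequence}).

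\emph{The main obstacle} is establishing the plurisubharmonicity of the disc envelope on a (possibly singular) locally irreducible complex space and bridging between plurisubharmonicity of $Pu_\epsilon$ on $\Omega$ and the hull condition formulated with $\Psh(X)$; this is the substantive content of \cite{DF-Indiana} and requires careful handling of discs near singularities and near $\di\Omega$. The sub-mean value estimate, the choice of the weight $u_\epsilon$, and the diagonal extraction are all routine once that tool is available.
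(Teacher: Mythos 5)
The paper does not actually prove Theorem \ref{thm:Poletsky}: it is quoted as a known result, due to Poletsky for domains in $\C^n$, to Rosay for manifolds, and to \cite[Corollary 1.4]{DF-Indiana} in the stated generality, so there is no in-paper proof to compare against. Your sufficiency argument is correct and is essentially the computation the paper itself carries out in the first half of the proof of Proposition \ref{prop:2} (and alludes to after Definition \ref{def:P-seq}); the only cosmetic issue is that the error term should be $\frac{1}{2\pi j}\bigl(M'-(M+\epsilon)\bigr)$ rather than $M'/(2\pi j)$, which does not affect the limit.

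The necessity direction has one genuine gap. The envelope $Pu_\epsilon$ supplied by the Poletsky--Rosay theorem of \cite{DF-Indiana} is plurisubharmonic on $\Omega$, not on $X$, whereas membership of $x$ in $\wh K_{\Psh(X)}$ only yields $v(x)\le\sup_K v$ for $v\in\Psh(X)$. Since $\Psh(\Omega)$ is in general strictly larger than the set of restrictions of functions in $\Psh(X)$, the hull $\wh K_{\Psh(\Omega)}$ may be strictly smaller than $\wh K_{\Psh(X)}\cap\Omega$, so the step ``the hull hypothesis forces $Pu_\epsilon(x)\le 0$'' does not follow from what you have written; this localization is precisely the nontrivial content of \cite[Corollary 1.4]{DF-Indiana} beyond the bare plurisubharmonicity of disc envelopes, and cannot be dismissed as a ``comparison made available by compactness'' without an argument. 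In the Stein case the paper's Proposition \ref{prop:2} bridges exactly this point by taking $\Omega=V$ to be a Runge pseudoconvex neighborhood of the hull, so that $\wh K_{\Psh(V)}=\wh K_{\cO(X)}$; for a general locally irreducible $X$ and an arbitrary $\Omega\Supset\wh K_{\Psh(X)}$ you must either compare $Pu_\epsilon$ with a genuine member of $\Psh(X)$ or otherwise exploit that $\Omega$ contains the whole compact hull. Everything else in your necessity argument (the choice of $u_\epsilon$, the Chebyshev estimate, the diagonal extraction) is routine and correct.
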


\begin{definition}
\label{def:P-seq}
A sequence of analytic discs $f_j$ satisfying the conditions
in the above theorem, with supports contained in a compact subset of $X$, 
is called a {\em P-sequence} for the pair $(K,x)$. 
\end{definition}

One can replace $1/j$ in (\ref{eq:Psequence}) 
by any sequence $\epsilon_j>0$ decreasing to $0$
without changing the conclusion of the theorem. 
The existence of a P-sequence for $(K,x)$  
trivially implies that $x\in \wh K_{\Psh(X)}$,
but the converse is nontrivial. 
Applying this theorem to a family of neighborhoods shrinking 
down to the hull $\wh K_{\Psh(X)}$ we obtain:

\begin{corollary}
\label{cor:1}
Assume that the sets $K\subset X$ satisfy the 
hypotheses of Theorem \ref{thm:Poletsky}.
For every point $x\in \wh K_{\Psh(X)}$ there exists 
a sequence of analytic discs $f_j\in \cA_X$ satisfying 
$f_j(0)=x$, the condition (\ref{eq:Psequence}), and also
\[
	\max_{\zeta \in \ol D} \dist_X \left(f_j(\zeta),\wh K_{\Psh(X)} \right)< 1/j, 
	\qquad 	j=1,2,\ldots.
\]
\end{corollary}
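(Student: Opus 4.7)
The plan is to derive Corollary \ref{cor:1} from Theorem \ref{thm:Poletsky} by applying the theorem to a shrinking family of neighborhoods of $\wh K_{\Psh(X)}$ and then extracting a diagonal subsequence of discs.

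First I would fix, by local compactness of $X$ and compactness of $\wh K_{\Psh(X)}$, an open relatively compact neighborhood $\Omega_0\ss X$ of $\wh K_{\Psh(X)}$. For each integer $j\ge 1$ set
\[
    \Omega_j = \Omega_0 \cap \{y\in X : \dist_X(y,\wh K_{\Psh(X)}) < 1/j\}.
\]
Each $\Omega_j$ is open, is contained in $\Omega_0$ (hence relatively compact in $X$), and contains $\wh K_{\Psh(X)}$. Thus the triple $(X, K, \Omega_j)$ meets the hypotheses of Theorem \ref{thm:Poletsky} for every $j$.

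Next, fix $x\in \wh K_{\Psh(X)}$. Theorem \ref{thm:Poletsky}, applied with $\Omega=\Omega_j$, yields a sequence $\{f^{(j)}_k\}_{k\in\N}\subset \cA_{\Omega_j}$ with $f^{(j)}_k(0)=x$ satisfying (\ref{eq:Psequence}) in the form
\[
    \big|\{t\in[0,2\pi] : \dist_X(f^{(j)}_k(\E^{\I t}),K) < 1/k\}\big| > 2\pi - 1/k.
\]
I would then diagonalize: choose $k_j\in\N$ with $k_j > j$ and set $f_j := f^{(j)}_{k_j}$. Since $1/k_j < 1/j$, the set on the left is contained in $\{t : \dist_X(f_j(\E^{\I t}),K) < 1/j\}$, whose Lebesgue measure is therefore $> 2\pi - 1/k_j > 2\pi - 1/j$, so the sequence $\{f_j\}$ satisfies (\ref{eq:Psequence}). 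By construction $f_j(\ol\D)\subset \Omega_j$, which gives
\[
    \max_{\zeta\in\ol\D} \dist_X(f_j(\zeta), \wh K_{\Psh(X)}) < 1/j,
\]
as required.

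There is no serious obstacle here: the content is entirely in Theorem \ref{thm:Poletsky}, and the only point requiring even a moment's care is that the $\Omega_j$ are genuinely relatively compact in $X$, which is secured by nesting them inside the fixed set $\Omega_0\ss X$.
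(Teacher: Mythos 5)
Your proof is correct and follows exactly the route the paper indicates: the paper derives Corollary \ref{cor:1} in a single sentence by ``applying this theorem to a family of neighborhoods shrinking down to the hull,'' and your argument with the nested sets $\Omega_j$ and the diagonal choice $f_j=f^{(j)}_{k_j}$ is precisely the fleshed-out version of that sketch. The two points needing care --- relative compactness of the $\Omega_j$ and the monotonicity that makes the $1/k_j$-condition imply the $1/j$-condition --- are both handled correctly.
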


By Wold \cite{Wold2010}, Corollary \ref{cor:1} implies the 
following theorem of Duval and Sibony \cite{Duval-Sibony1} 
with additional control of the support of the current.

\begin{corollary}
\label{cor:Wold}
Let $K$ be a compact set in $\C^n$. 
For every point $p\in\wh K$ there exists 
a positive $(1,1)$-current T on $\C^n$ (acting on the space 
of $(1,1)$-forms with continuous coefficients) satisfying 
$p \in \supp T\subset \wh K$ and $\d\d^c T=\mu-\delta_p$, 
where $\mu$ is a probability measure on $K$ 
and $\delta_p$ is the Dirac mass at $p$.
\end{corollary}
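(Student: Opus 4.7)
The plan follows Wold's strategy: convert the P-sequence provided by Corollary \ref{cor:1} into a sequence of positive currents on $\C^n$ and pass to a weak$^*$ subsequential limit. First I apply Corollary \ref{cor:1} in $X = \C^n$ (where the plurisubharmonic hull coincides with the polynomial hull $\wh K$) to the pair $(K,p)$, obtaining analytic discs $f_j \in \cA_{\C^n}$ with $f_j(0)=p$, with $f_j(\ol \D)$ contained in the $1/j$-neighborhood of $\wh K$, and with the boundary condition $|\{t\in[0,2\pi]\colon \dist(f_j(\E^{\I t}),K) < 1/j\}| > 2\pi - 1/j$.

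To each $f_j$ I associate the positive current of bidimension $(1,1)$
$$\langle T_j, \alpha \rangle = -\int_\D \log|\zeta|\, f_j^* \alpha,$$
which is positive because $-\log|\zeta| \ge 0$ on $\D$ and $f_j^*\alpha$ is a positive $(1,1)$-form whenever $\alpha$ is. A Poisson--Jensen computation on $\ol \D$, using $\d\d^c \log|\zeta| = \delta_0$ in the standard normalization and $\log|\zeta|=0$ on $\bT$, gives
$$\d\d^c T_j = (f_j)_* \sigma - \delta_p,$$
where $\sigma = \d t/(2\pi)$ is normalized Lebesgue measure on $\bT$. Uniform mass control follows by pairing with $\d\d^c |z|^2$, a positive constant-coefficient $(1,1)$-form comparable to the reference Kähler form: since the supports of all $T_j$ lie in the fixed compact set $\wh K + \ol\B$,
$$\langle T_j, \d\d^c |z|^2\rangle = \int |z|^2\, \d(f_j)_* \sigma - |p|^2 \le \sup_{\wh K + \ol\B}|z|^2 - |p|^2,$$
which is independent of $j$.

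After passing to subsequences, $T_j \to T$ weak$^*$ on $(1,1)$-forms with continuous coefficients, and $(f_j)_*\sigma \to \mu$ weak$^*$ as measures. Mass is preserved in the measure limit because of the uniformly compact supports, so $\mu$ is a probability measure; the P-sequence condition $(f_j)_*\sigma(\C^n \setminus (K+\epsilon\B)) \le 1/(2\pi j) \to 0$ forces $\supp \mu \subset K$. Weak$^*$ continuity of $\d\d^c$ yields $\d\d^c T = \mu - \delta_p$, and $\supp T_j \subset \wh K + (1/j)\B$ passes to $\supp T \subset \wh K$ in the limit. Assuming $p \notin K$ (the case $p \in K$ is immediate with $\mu = \delta_p$ and a suitable auxiliary disc), the nontriviality $p \in \supp T$ follows from the equation: if $T$ vanished on a neighborhood $U$ of $p$ disjoint from $K$, then $(\d\d^c T)|_U = 0$ would contradict $(\d\d^c T)|_U = -\delta_p$. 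The main obstacle is guarding against $T$ degenerating in the weak$^*$ limit, but the $-\delta_p$ on the right of $\d\d^c T = \mu - \delta_p$ carries this nontriviality forward automatically, so the uniform mass bound in the second paragraph is precisely what drives the argument.
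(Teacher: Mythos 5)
Your argument is exactly the one the paper relies on: the paper deduces this corollary by citing Wold [Wol], and Wold's proof is precisely your construction --- attach to each Poletsky disc from Corollary \ref{cor:1} the positive current $\alpha\mapsto -\int_\D \log|\zeta|\, f_j^*\alpha$, compute $\d\d^c T_j=(f_j)_*\sigma-\delta_p$ by Poisson--Jensen, bound the masses uniformly by pairing with $\d\d^c|z|^2$ (using that the discs stay in a $1/j$-neighborhood of $\wh K$), and pass to a weak$^*$ limit. The only soft spot is your parenthetical treatment of $p\in K$: there the limit current can degenerate to $0$ (for $K=\{p\}$ no nonzero positive current of order zero supported at a point has $\d\d^c T$ equal to a measure, so no ``auxiliary disc'' can help), but this is a defect of the degenerate case of the statement itself rather than of your argument, which is correct whenever $p\in\wh K\setminus K$.
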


The converse is trivial: If a current $T$ with these properties 
exists, then for any plurisubharmonic function 
$u\in\cC^2(\C^n)$ we have 
\[
   0\le T(\d\d^c u) = \int_K u\,\d\mu - u(p) \le \sup_K u - u(p)
\]
and hence $u(p)\le \sup_K u$.

In a Stein space $X$ we get the same description for the part
of the hull in the regular locus even if $X$ is not locally irreducible.

\begin{proposition}
\label{prop:2}
Let $K$ be a compact set in Stein space $X$ such that $K\not\subset X_{\rm sing}$.
Choose a relatively compact pseudoconvex Runge domain 
$V\Subset X$ containing $\wh K_{\cO(X)}$. 
Then a point $x\in X_{\rm reg}\cap V$ belongs to $\wh K_{\cO(X)}$ if and only
if for every open set $U\supset K$ and 
$\epsilon>0$ there exist a disc $f\in \cA_V$ 
and a set $E_f\subset [0,2\pi]$ of 
Lebesgue measure $|E_f| <\epsilon$ such that 
\[
	 	f(0)=x\quad {\rm and} \quad
	 	f(\E^{\I t})\in U\ \hbox{for all } t\in [0,2\pi]\bs E_f.  
\]
\end{proposition}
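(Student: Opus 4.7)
The plan is to reduce the nontrivial direction to Theorem~\ref{thm:Poletsky} by passing to the normalization $\nu\colon \wt X\to X$: $\wt X$ is a normal (hence locally irreducible) Stein space, $\nu$ is finite, proper, surjective and holomorphic, and since $x\in X_{\rm reg}$ the map $\nu$ is a biholomorphism near $x$, so that $\wt x:=\nu^{-1}(x)$ is a single point.

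The easy direction is a standard sub-mean-value argument. Given any $v\in\Psh(X)$ and $\eta>0$, the set $U_\eta=\{v<\sup_K v+\eta\}$ is an open neighbourhood of $K$, so for any $\epsilon>0$ the hypothesis furnishes a disc $f\in\cA_V$ with $f(0)=x$ and $f(\E^{\I t})\in U_\eta$ off a set of measure $<\epsilon$. Combining $v(x)\le\frac{1}{2\pi}\int_0^{2\pi}v(f(\E^{\I t}))\,\d t$ with the bound $v\le M:=\sup_{\overline V}v<\infty$ on the compact set $\overline V$, and letting $\epsilon,\eta\to 0$, yields $v(x)\le \sup_K v$; since $X$ is Stein, $\wh K_{\cO(X)}=\wh K_{\Psh(X)}$, so $x\in\wh K_{\cO(X)}$.

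For the converse, put $\wt K=\nu^{-1}(K)$ and $\wt V=\nu^{-1}(V)$. Properness of $\nu$ makes $\wt K$ compact and $\wt V\Subset \wt X$, and pulling back a psh exhaustion of $V$ shows $\wt V$ is pseudoconvex. Granting for the moment that $\wt x\in\wh{\wt K}_{\cO(\wt X)}$, this hull is compact (by Steinness of $\wt X$) and contained in $\nu^{-1}(\wh K_{\cO(X)})\subset \wt V$. Theorem~\ref{thm:Poletsky} applied to $(\wt K,\wt x)$ with $\Omega=\wt V$ then produces discs $\wt f_j\in\cA_{\wt V}$ with $\wt f_j(0)=\wt x$ and $\wt f_j(\E^{\I t})$ within $\dist_{\wt X}$-distance $1/j$ of $\wt K$ off a set of measure $<1/j$. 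Given $U\supset K$ open and $\epsilon>0$, choose $j$ so large that $1/j<\epsilon$ and $\{y\in\wt X\colon \dist_{\wt X}(y,\wt K)<1/j\}\subset \nu^{-1}(U)$; then $f:=\nu\circ \wt f_j\in\cA_V$ has $f(0)=x$ and $f(\E^{\I t})\in U$ off a set of measure $<\epsilon$, as required.

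The main obstacle is the claim $\wt x\in\wh{\wt K}_{\cO(\wt X)}=\wh{\wt K}_{\Psh(\wt X)}$. For this, given $\wt v\in\Psh(\wt X)$, I would consider its fibrewise maximum
\[
v(y):=\max_{\wt y\in\nu^{-1}(y)}\wt v(\wt y),\qquad y\in X,
\]
which is upper semicontinuous and locally bounded above because $\nu$ is finite and proper. The delicate point is that $v\in\Psh(X)$: by the universal property of the normalization, every analytic disc $g\in\cA_X$ lifts to finitely many discs $\wt g_1,\ldots,\wt g_k\in\cA_{\wt X}$ with $\nu\circ \wt g_i=g$, and then $v\circ g=\max_i \wt v\circ \wt g_i$ is a finite maximum of subharmonic functions, hence subharmonic. (This is the standard Fornaess-Narasimhan fact that finite-sheeted push-forward preserves plurisubharmonicity.) Since $v(x)=\wt v(\wt x)$ and $v\le \sup_{\wt K}\wt v$ on $K$ (as $\nu^{-1}(K)\subset \wt K$), the hypothesis $x\in\wh K_{\Psh(X)}$ gives $\wt v(\wt x)=v(x)\le \sup_K v\le \sup_{\wt K}\wt v$, completing the reduction.
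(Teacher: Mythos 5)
Your forward direction (discs $\Rightarrow$ $x\in\wh K_{\cO(X)}$) is fine and is essentially the paper's argument. The converse, however, has a genuine gap at its crux: the claim that the fibrewise maximum $v(y)=\max_{\wt y\in\nu^{-1}(y)}\wt v(\wt y)$ of a plurisubharmonic function $\wt v$ on the normalization is plurisubharmonic on $X$. This is \emph{false} precisely in the situation the proposition is meant to address, namely when $X$ is not locally irreducible. Take $X=\{z_1z_2=0\}\subset\C^2$, so $\wt X=\C\sqcup\C$ and the non-normal locus is the origin; let $\wt v\equiv 0$ on the first sheet and $\wt v\equiv 1$ on the second. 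Then $v$ equals $0$ on the punctured $z_1$-axis and $1$ at the origin, so $v$ composed with the disc $\zeta\mapsto(\zeta,0)$ violates the sub-mean-value inequality at $0$. The flaw in your justification is the identity $v\circ g=\max_i\wt v\circ\wt g_i$: a disc $g$ not contained in the non-normal locus has a \emph{unique} lift $\wt g$ (universal property of the normalization), whereas $v(g(\zeta_0))$ is the maximum of $\wt v$ over \emph{all} points of $\nu^{-1}(g(\zeta_0))$, including preimages lying on other local branches that $\wt g$ never visits; hence $v\circ g\ge\wt v\circ\wt g$ with possible strict inequality at isolated points, which destroys subharmonicity. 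The ``finite push-forward preserves psh'' principle you invoke is valid for analytic covers (finite proper \emph{open} surjections, e.g.\ onto irreducible or normal targets), but the normalization of a locally reducible space is not open. For $X$ locally irreducible your lemma does hold, but then Theorem \ref{thm:Poletsky} applies directly and the proposition is not needed.

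So the reduction $x\in\wh K_{\cO(X)}\Rightarrow \wt x\in\wh{\wt K}_{\cO(\wt X)}$ is left unproved (it is exactly as hard as the extension of bounded psh functions across the non-normal locus, which fails in the same example). Note that this is also where your argument should, but does not, use the hypotheses $x\in X_{\rm reg}$ and $K\not\subset X_{\rm sing}$ in an essential way. The paper avoids the normalization altogether: it takes the envelope $v$ of the Poisson functional of the indicator-type function $u$ ($=-1$ on $U$, $0$ elsewhere), which by \cite[Theorem 1.1]{DF-Polonici} is psh only on $V\cap X_{\rm reg}$, and then repairs the singular locus by adding $\epsilon\rho$ where $\rho\in\Psh(X)$ has $\rho^{-1}(-\infty)=X_{\rm sing}$ (complete pluripolarity of $X_{\rm sing}$ in a Stein space); letting $\epsilon\to0$ uses precisely $x\in X_{\rm reg}$ and $K\not\subset X_{\rm sing}$ to conclude $v(x)=-1$, whence the desired discs come straight from the definition of the envelope. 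If you want to keep your normalization scheme, you would need an independent proof that $\wt x\in\wh{\wt K}_{\cO(\wt X)}$; the fibrewise-max argument cannot supply it.
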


\begin{remark}
We do not know whether the same  conclusion
holds for points $x\in \wh K_{\cO(X)} \cap X_{\rm sing}$.
However, if $K$ is entirely contained in the singular locus $X_{\rm sing}$, 
then its holomorphic hull $\wh K_{\cO(X)}$ also lies in $X_{\rm sing}$ and 
$\wh K_{\cO(X)}=\wh K_{\cO(X_{\rm sing})}$, so we may apply 
Proposition \ref{prop:2} to the Stein space $X_{\rm sing}$.
\end{remark}

\begin{proof}
Assume first that a point $x\in X$ satisfies the stated conditions;
we shall prove that $x\in \wh K_{\cO(X)}$.
Choose a function $\rho\in \Psh(X)$. 
Set $M=\sup_K \rho$ and $M'=\sup_{V} \rho$. 
Pick a number $\epsilon>0$ and an open set $U$ with 
$K\subset U\Subset V$  such that $\sup_U \rho < M+\epsilon$. 
Let the disc $f\in \cA_V$ and the set $E_f\subset [0,2\pi]$ 
satisfy the hypotheses of the corollary. 
Recall that the Poisson functional $P_u(f)$ associated to 
an upper semicontinuous function $u\colon X\to\R\cup\{-\infty\}$ 
is defined by 
\begin{equation}
\label{eqn:Poisson}
	   P_u(f) = \int_0^{2\pi} u(f(\E^{\I t}))\, \frac{\d t}{2\pi},\qquad f\in \cA_X.
\end{equation}
Since $\rho$ is plurisubharmonic, we have  
\[
	\rho(x) \le P_{\rho}(f) = \int_{E_f} \rho(f(\E^{\I t}))\, \frac{\d t}{2\pi} 
	          + \int_{[0,2\pi]\bs E_f} \rho(f(\E^{\I t}))\, \frac{\d t}{2\pi} 
          < M'\epsilon + M+\epsilon.  
\]
Since this holds for every $\epsilon>0$, we get that
$\rho(x)\le M$. As $\rho\in \Psh(X)$ was arbitrary,
we conclude that  $x\in \wh K_{\Psh(X)}=\wh K_{\cO(X)}$.

Conversely, assume that $x\in \wh K_{\cO(X)} \cap X_{\rm reg}$. 
Since $V$ is a Runge pseudoconvex domain in a Stein space $X$, we have
$\wh K_{\cO(X)}=\wh K_{\cO(V)}=\wh K_{\Psh(V)}$.
The function $u\colon V\to [-1,0]$ which equals $-1$ on the 
open set $U\supset K$ and equals $0$ on $V \bs U$ 
is upper semicontinuous. Let $v\colon V\to\R$ be the envelope 
of the Poisson functional $P_u$ (\ref{eqn:Poisson}) corresponding to $u$.
Then clearly $-1\le v\le 0$ on $V$, and $v=-1$ on $U$.
According to \cite[Theorem 1.1]{DF-Polonici}, 
the function $v$ is plurisubharmonic on $V\cap X_{\rm reg}$.

The singular locus $X_{\rm sing}$ is closed and locally complete 
pluripolar. Since $X$ is Stein, it is also complete pluripolar 
(see \cite{Col,Dem}). Pick a plurisubharmonic function 
$\rho$ on $X$ such that $\rho^{-1}(-\infty)=X_{\rm sing}$.
The function $v+\epsilon \rho$ is then plurisubharmonic on $V$ 
for every $\epsilon>0$. Therefore
\[
   (v+\epsilon \rho)(x)\le \sup_K (v+\epsilon \rho) = -1 +\epsilon \sup_K \rho
\]
for every $\epsilon >0$. Since $x \in X_{\rm reg}$ and 
$K\not\subset X_{\rm sing}$, we get by letting $\epsilon\to 0$
that $v(x)=-1$. The definition of $v$ implies that for every
$\epsilon>0$ there is a disc $f\in \cA_V$ with $f(0)=x$ 
and $P_u(f)< -1+ \epsilon/2\pi$.
Hence the set $E_f=\{t\in [0,2\pi] \colon f(\E^{\I t}) \notin U\}$ has measure 
at most $\epsilon$.
\end{proof}

%
\section{Sequences of analytic discs with the bounded lifting property}
\label{sec:proj-discs}
Let $\pi\colon \C^{n+1}_*=\C^{n+1}\setminus \{0\}\to \P^n$ be the 
standard projection; this is a holomorphic fiber bundle
with fiber $\C^*=\C\setminus\{0\}$, obtained by removing the
zero section from the universal line bundle $L\to\P^n$.
Every continuous map $f\colon \ol\D\to \P^n$ from the closed disc 
lifts to a continuous map $F\colon \ol\D\to \C^{n+1}_*$ 
as illustrated in the following diagram: 
\begin{equation}
\label{eq:diagram}
	\xymatrix{ 
	 & \C^{n+1}_*  \ar[d]^{\pi} \\ 
   \ol\D \ar[ur]^F \ar[r]^{f} & \P^n }
\end{equation}
The Oka principle shows that a holomorphic map $f$ 
can be lifted to a holomorphic map $F$ 
(see Corollary 5.4.11 in \cite[p.\ 196]{F-book}).
In fact, lifting $f$ is equivalent to finding 
a nowhere vanishing section, that is, a trivialization, 
of the pullback $f^*L$ of the universal bundle $L\to\P^n$. 
Since every holomorphic line bundle over the disc $\D$ 
(in fact, over any open Riemann surface) is  
holomorphically trivial, a lifting exists.

\begin{definition}
\label{def:BLP} 
A sequence of analytic discs $f_j\colon \ol\D\to \P^n$
enjoys the {\em bounded lifting property} if there
exist a constant $C>0$ and a sequence of analytic discs 
$F_j\colon\ol\D\to\C^{n+1}_*$ satisfying $\pi\circ F_j=f_j$ and  
\begin{equation}
\label{eq:BLP}
	\sup_{t\in[0,2\pi]} |F_j(\E^{\I t})| \le C\, |F_j(0)|,\qquad j=1,2,\ldots.
\end{equation}
\end{definition}

The following result describes the projective hull
in terms of Poletsky sequences of discs in $\P^n$ with the bounded lifting property. 
For the notion of a P-sequence see Def.\ \ref{def:P-seq} above.

\begin{theorem}
\label{thm:main}
Let $K$ be a compact set in $\P^n$. A point $x\in \P^n$
belongs to the projective hull $\wh K_{\P^n}$ if and only if 
there is a P-sequence $f_j\colon \ol\D\to\P^n$ 
for $(K,x)$ with the bounded lifting property.
\end{theorem}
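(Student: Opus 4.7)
\textit{Forward direction.} Suppose $f_j\colon\ol\D\to\P^n$ is a P-sequence for $(K,x)$ with lifts $F_j\colon\ol\D\to\C^{n+1}_*$ satisfying $\sup_\bT|F_j|\leq C|F_j(0)|$. After rescaling each $F_j$ by a scalar in $\C^*$ (the bound is scale invariant), I may assume $F_j(0)=p$ for a fixed point $p\in\pi^{-1}(x)$ independent of $j$. Let $P$ be a homogeneous polynomial of degree $d$ on $\C^{n+1}$, corresponding to a section of $\cO_{\P^n}(d)$. The sub-mean-value inequality for the subharmonic function $\log|P\circ F_j|$ reads
\[
	\log|P(p)|\leq \int_0^{2\pi}\log|P\circ F_j(\E^{\I t})|\,\frac{\d t}{2\pi}.
\]
I would split this integral over the ``good'' set (of measure $>2\pi-1/j$) on which $\dist(f_j(\E^{\I t}),K)<1/j$ and its complement. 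On the good set, the Hopf fibration $\pi|_S\colon S\to\P^n$ being a Lipschitz submersion places the unit direction $F_j(\E^{\I t})/|F_j(\E^{\I t})|\in S$ within $O(1/j)$ of $S_K$; the bounded lifting bound and homogeneity of $P$ then give $|P\circ F_j(\E^{\I t})|\leq (C|p|)^d(\sup_{S_K}|P|+o(1))$. On the complementary set, the uniform bound $(C|p|)^d\sup_S|P|$ suffices because its measure tends to zero. Letting $j\to\infty$ yields $|P(p)|\leq (C|p|)^d\sup_{S_K}|P|$, so $q:=p/(C|p|)\ne 0$ satisfies $|P(q)|\leq\sup_{S_K}|P|$ for every homogeneous $P$; since $S_K$ is circular, \cite[Prop.~5.4]{Harvey-Lawson2006} gives $q\in\wh S_K\setminus\{0\}$, and (\ref{eq:SK}) yields $x\in\wh K_{\P^n}$.

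\textit{Backward direction: setup and main obstacle.} Conversely, given $x\in\wh K_{\P^n}$, I would fix $p\in\wh S_K\setminus\{0\}$ above $x$ using (\ref{eq:SK}) and apply Corollary \ref{cor:1} to the pair $(S_K,p)$ in the Stein manifold $\C^{n+1}$, noting that $\wh S_K\subset\ol\B$ is compact. This produces analytic discs $F_j\colon\ol\D\to\C^{n+1}$ with $F_j(0)=p$, satisfying the P-sequence condition for $S_K$, and with $F_j(\ol\D)$ contained in a $1/j$-neighbourhood of $\wh S_K$, hence uniformly bounded. The main obstacle is that $0\in\wh S_K$, so the discs $F_j$ may hit the origin in $\ol\D$ and the would-be projection $\pi\circ F_j$ need not be defined.

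\textit{Removing the zeros and concluding.} I would overcome this obstacle by modifying each $F_j$ in two steps, each of which multiplies $F_j$ by a $\C^*$-valued holomorphic scalar on $\D$ (and so preserves the projection to $\P^n$ pointwise on $\bT$ modulo that scalar) while fixing $F_j(0)=p$. First, a small generic perturbation $F_j^{(1)}(\zeta):=F_j(\zeta)+\eta_j\zeta v_j$, with $\eta_j\to 0$ and $v_j\in\C^{n+1}$ chosen off the one-real-parameter curve $\{-F_j(\E^{\I t})/(\eta_j\E^{\I t}):t\in[0,2\pi]\}$ (which has empty interior in $\C^{n+1}$ for $n\geq 1$), kills all zeros on $\bT$ while preserving $F_j^{(1)}(0)=p$ and, after harmless reindexing, the P-sequence condition. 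Since $F_j^{(1)}$ is continuous on $\ol\D$ and nonzero on $\bT$, its zeros in $\D$ form a finite set; let $B_j$ be the finite Blaschke product with exactly those zeros counted with multiplicity, and set $G_j:=(B_j(0)/B_j)\,F_j^{(1)}$. Then $G_j\colon\ol\D\to\C^{n+1}_*$ is continuous, holomorphic in $\D$, $G_j(0)=p$, and $|G_j|=|B_j(0)|\cdot|F_j^{(1)}|\leq|F_j^{(1)}|$ on $\bT$, uniformly bounded. Setting $f_j:=\pi\circ G_j\colon\ol\D\to\P^n$, the uniform bound $\sup_\bT|G_j|\leq 2$ together with $|G_j(0)|=|p|$ yields the bounded lifting property with $C=2/|p|$; and Lipschitz continuity of $\pi$ on $\{|z|\geq 1/2\}$ converts the good-set estimate $\dist(F_j(\E^{\I t}),S_K)<1/j$ into $\dist(f_j(\E^{\I t}),K)=O(1/j)$, producing the required P-sequence for $(K,x)$.
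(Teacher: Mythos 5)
Your proof is correct and follows the same route as the paper's: both directions pass through the polynomial hull of the circular set $S_K\subset\C^{n+1}$, using the lifting/projection correspondence and Poletsky's theorem applied to the pair $(S_K,p)$. The only differences are technical and both variants are sound: you verify the ``discs imply hull membership'' direction by an explicit sub-mean-value estimate for $\log|P\circ F_j|$ with homogeneous polynomials, whereas the paper compares $\dist_{\C^{n+1}}(F_j(\E^{\I t}),B_K)$ with $\dist_{\P^n}(f_j(\E^{\I t}),K)$ and invokes the trivial direction of the P-sequence criterion; and you push the lifted discs off the origin by dividing out a finite Blaschke product, whereas the paper uses a small generic deformation of the whole disc.
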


\begin{remark}
The bounded lifting property in Theorem \ref{thm:main} is crucial.
Indeed, taking $K=\{p\}$ to be a singleton
and $x$ to be any point of $\P^n$, we consider a projective 
line $\P^1\cong \Lambda\subset\P^n$ through $x$ and $p$. By removing 
from $\Lambda$ a small disc around $p$ we obtain an analytic 
disc through $x$ that has all of its boundary as close as desired to $K$.
\qed\end{remark}

\begin{proof}
Fix a point $x\in \wh K_{\P^n}$. Let $L_x$ be the complex line through 
$0$ in $\C^{n+1}$ determined by $x$. By (\ref{eq:hull2}) 
the disc $\triangle_x = L_x\cap \wh S_K$ has positive radius $r(x)>0$.
Pick a point $p\in\triangle_x$ with $|p|=r(x)$. 
Theorem \ref{thm:Poletsky} furnishes a sequence of analytic discs 
\[
	F_j\colon \ol\D\to \B_{1+1/j}=\{z\in\C^{n+1}\colon |z|<1+1/j\}
\]
such that for all $j\in\N$ we have $F_j(0)=p$ and 
\begin{equation}
\label{eq:estimateF}
	\big| \left\{ t\in[0,2\pi] \colon 
	\dist_{\C^{n+1}} \left(F_j(\E^{\I t}),S_K\right) < 1/j  \right\} 
	\big|  	> 2\pi - 1/j.
\end{equation}
By a small deformation of $F_j$, keeping the centers $F_j(0)=p$ fixed,
we may assume that none of the image discs $F_j(\ol\D)$ contains the origin,
so $F_j(\ol\D) \subset\C^{n+1}_*$ for all $j\in\N$.
Set $f_j=\pi\circ F_j\colon \ol\D\to \P^n$; hence $f_j(0)=x$ for all $j$.
We endow the sphere $S\subset\C^{n+1}$ with the Riemannian metric
induced from $\C^{n+1}$, and $\P^n$ is endowed with the Fubini-Study metric.
In this pair of metrics the projection $\pi|_S\colon S\to \P^n$ 
has Lipschitz constant one; hence  $\pi$ has Lipschitz 
constant at most $2$ in some neighborhood of $S$. This implies that
\[
	\big| \left\{ t\in[0,2\pi] \colon 
	\dist_{\P^{n}} (f_j(\E^{\I t}),K) < 2/j  \right\} 
	\big|  	> 2\pi - 1/j.
\]
Thus the sequence of discs $f_{2j}$ in $\P^n$ is a P-sequence for 
the pair $(K,x)$. By the construction the sequence $f_j$ has 
the bounded lifting property; indeed, (\ref{eq:BLP}) holds for 
the constant $C=1/r(x) +\epsilon$ for any $\epsilon>0$ 
(but we can not take $\epsilon=0$). This establishes one of the implications.

To prove the converse implication, assume that there exists a P-sequence
$f_j\colon\ol\D\to \P^n$ for $(K,x)$ with the bounded lifting property.
Let $C>0$ be a constant satisfying (\ref{eq:BLP}). 
Pick a point $p\in \pi^{-1}(x)$ with $|p|=1/C$.
Let $F_j\colon \ol\D\to\C^{n+1}_*$ be a lifting of $f_j$
with $F_j(0)=p$ (this can be achieved by a rescaling). 
Then $\sup_{t\in[0,2\pi]} |F_j(\E^{\I t})| \le C|p|=1$ 
for all $j$. Inside the unit ball $\ol\B \subset\C^{n+1}$
the set $B_K$ (\ref{eq:BK}) is a union of fibers of $\pi$,
and the map $\pi$ is expanding in directions orthogonal to the 
complex lines $L_x$. Hence we have
\[
	\dist_{\C^{n+1}} \left(F_j(\E^{\I t}),B_K\right) \le 
	\dist_{\P^n} \left( f_j(\E^{\I t}),K\right)
\]
for all $t\in[0,2\pi]$ and $j\in\N$. 
Therefore the estimate (\ref{eq:estimateF}) holds, 
which means that $F_j$ is a P-sequence in $\C^{n+1}$ for 
the pair $(S_K,p)$. Hence the point $p$ belongs 
to the polynomial hull of $S_K$, and so the point 
$x=\pi(p)\in\P^n$  belongs to the projective hull of $K$.
\end{proof}

It would be interesting to understand conditions 
on a sequence of analytic discs in $\P^n$ implying the bounded lifting property.
Here is a simple observation: 
If $\Omega$ is an open simply connected Stein domain 
in $\P^n$ then the $\C^*$-bundle $\pi\colon \C^{n+1}\setminus \{0\} \to \P^n$
is holomorphically trivial over $\Omega$, and hence any sequence of discs
contained in a relatively compact subset of $\Omega$ has the bounded 
lifting property. This holds in particular if $\Omega$ is an affine 
chart $\C^n\subset \P^n$.

%
\section{Projective hulls of compact connected sets in affine charts}
\label{sec:HL}
In this section we extend a  theorem of Lawson and Wermer \cite{Lawson-Wermer},
which pertains to projective hulls of closed real curves in $\P^n$,
to an arbitrary compact connected set contained in an affine chart 
of $\P^n$. 

Assume that $\P^{n-1}\cong H \subset \P^n$ is a projective hyperplane 
and $\Omega$ is a nonempty open subset of $\C^n=\P^n\setminus H$.
Let $f\colon\ol\D\to \P^n$ be an analytic disc with $f(\bT)\subset \Omega$.
Then $f$ intersects $H$ in at most finitely many points of $\D$. 
The following quantity $J(f)$ (a disc functional) was first introduced 
by L\'arusson and Sigurdsson in \cite{Larusson-Sigurdsson2005}:
\begin{equation}
\label{eq:J}
	J(f) = -\sum_{\zeta\in\D} m_\zeta \log|\zeta|  \ge 0.
\end{equation}
Here $m_\zeta\in \Z_+$ denotes the intersection number of 
$f$ with $H$ at $\zeta$; so $m_\zeta=0$ if $f(\zeta) \notin H$.
The number $J(f)$ equals the value at the origin of the (positive) 
Green function on $\D$ that equals zero on $b\D$ and has logarithmic
poles at the finitely many points $\zeta_j\in \D$ for which $f(\zeta_j)\in H$
(see \cite[\S 4]{DF-Polonici}).  

The following result is due to Lawson and Wermer \cite[Theorem 1]{Lawson-Wermer}
for the case when $K$ is a connected closed curve. Our proof uses the same
ingredients, but is simpler due to a more systematic use
of inequalities involving the Siciak-Zaharyuta extremal functions.

\begin{theorem}
\label{thm:LW}
Assume that $K$ is a compact connected set in $\P^n$ and
$H\cong\P^{n-1}$ is a hyperplane in $\P^n$ such that $K\cap H=\emptyset$.
Then a point $p\in \C^n=\P^n\setminus H$ belongs to the 
projective hull of $K$ if and only if there exist a constant $0\le C<+\infty$ 
and a sequence of analytic discs $f_j\colon \ol\D\to\P^n$ 
$(j\in\N)$ satisfying the following three properties:
\begin{itemize}
\item[\rm (a)] 
	$f_j(0)=p$ for all $j=1,2,\ldots$,
\item[\rm (b)] 
	$\lim_{j\to\infty} \max_{t\in[0,2\pi]} \dist(f_j(\E^{\I t}),K)=0$, and
\item[\rm (c)] 
	$J(f_j)\le C$ for all $j=1,2,\ldots$.
\end{itemize}
If this holds then there exists a sequence of analytic discs $f_j$ 
with simple poles satisfying conditions (a), (b) and
\begin{itemize}
\item[\rm (c')]
	$\lim_{j\to\infty} J(f_j) = V_K(p)$, the value at $p$ of the 
	Siciak-Zaharyuta extremal function of the set $K\subset\C^n$.
\end{itemize} 
\end{theorem}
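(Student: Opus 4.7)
The plan is to deduce Theorem \ref{thm:LW} from the envelope formula of L\'arusson and Sigurdsson \cite{Larusson-Sigurdsson2005}, which asserts that for any connected open $\Omega\subset\C^n$ and any $z\in\Omega$,
\[
V_\Omega(z) = \inf\{J(f) : f\in\cA_{\P^n},\ f(0)=z,\ f(\bT)\subset\Omega\}.
\]
Since $K$ is compact and connected, I would first fix a decreasing basis of connected relatively compact open neighborhoods $U_j\Subset\C^n$ of $K$, each chosen as the component containing $K$ of a shrinking open neighborhood. These neighborhoods bridge the envelope formula, which needs an open set, with the compact set $K$.

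For the forward direction, if $p\in\wh K_{\P^n}$ then by (\ref{eq:VK}) and the bijection $\cL_{\C^n}\leftrightarrow\Psh_\omega(\P^n)$ recalled in Section \ref{sec:projective}, $V_K(p)<+\infty$, and clearly $V_{U_j}(p)\le V_K(p)$. Applying the envelope formula on $U_j$ I obtain an analytic disc $f_j\in\cA_{\P^n}$ with $f_j(0)=p$, $f_j(\bT)\subset U_j$, and $J(f_j)\le V_K(p)+1/j$; this sequence satisfies (a), (b), (c) with $C=V_K(p)+1$, and moreover $\limsup_j J(f_j)\le V_K(p)$.

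For the converse I would start from any sequence $(f_j)$ satisfying (a)--(c) and test it against an arbitrary $v\in\cL_{\C^n}$ with $v|_K\le 0$. Let $\zeta^j_1,\dots,\zeta^j_{N_j}\in\D$ be the preimages of $H$ under $f_j$ with intersection multiplicities $m^j_i$. The Lelong growth bound $v(z)\le\log|z|+O(1)$ shows that
\[
u_j(\zeta) = v(f_j(\zeta)) + \sum_i m^j_i\,\log\left|\frac{\zeta-\zeta^j_i}{1-\ol{\zeta^j_i}\,\zeta}\right|
\]
is bounded above near each $\zeta^j_i$ and hence extends to a subharmonic function on $\D$ that agrees with $v\circ f_j$ on $\bT$. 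The sub-mean value inequality at the origin yields
\[
v(p) - J(f_j) = u_j(0) \le P_v(f_j),
\]
and upper semicontinuity of $v$ together with the uniform convergence $f_j(\bT)\to K$ from (b) forces $\limsup_j P_v(f_j)\le\sup_K v\le 0$. Hence $v(p)\le \liminf_j J(f_j)\le C$, and taking the supremum over admissible $v$ gives $V_K(p)\le C<+\infty$, so $p\in\wh K_{\P^n}$. The very same inequality $V_K(p)\le \liminf_j J(f_j)$, applied to the sequence produced in the forward direction, together with the bound $\limsup_j J(f_j)\le V_K(p)$ already obtained there, yields $\lim_j J(f_j)=V_K(p)$, which is (c'). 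To arrange simple poles I would perturb each $f_j$ slightly near a multiple intersection with $H$, splitting it into distinct simple intersections while preserving $f_j(0)=p$ and only nudging $f_j(\bT)$ into a marginally larger neighborhood of $K$; since $J$ depends continuously on such perturbations, the asymptotics in (c') persist.

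The main obstacle I anticipate is the subharmonic-correction step and the associated bookkeeping of intersection multiplicities: I need the Green-function corrections to cancel exactly the logarithmic singularities of $v\circ f_j$ at each preimage of $H$, so that $u_j$ is globally subharmonic on $\D$ and identically equal to $v\circ f_j$ on $\bT$. Everything else---the connectedness reduction used to apply the envelope formula, the upper-semicontinuity estimate for $P_v(f_j)$, and the perturbation to simple poles---should be routine once this central subharmonic identity is in place.
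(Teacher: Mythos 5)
Your proposal is correct and follows essentially the same route as the paper: the forward direction is identical (the L\'arusson--Sigurdsson envelope formula applied to a decreasing basis of connected neighborhoods of $K$), and your converse is a self-contained unwinding of the paper's key inequality $V_\Omega(f(0))\le J(f)$, since your Blaschke-corrected function $u_j$ is exactly the comparison with the Green function of $\D$ with poles on $f_j^{-1}(H)$ that the paper quotes from \cite{DF-Polonici}. The only presentational difference is that you test against individual $v\in\cL_{\C^n}$ and use upper semicontinuity of $v$ along $f_j(\bT)\to K$, where the paper instead invokes the monotone convergence $V_{\Omega_j}(p)\nearrow V_K(p)$; both settle the passage from open neighborhoods to the compact set $K$ and yield (c') in the same way.
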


\begin{proof}
For every analytic disc $f\colon\ol\D\to\P^n$
with center $f(0)\in\C^n$ and boundary $f(\bT)$ contained in
an open set $\Omega\subset\C^n$ we have the inequality 
\[
	V_\Omega(f(0))\le J(f).
\]
(See the  proof of Theorem 4.2 in \cite{DF-Polonici}. 
The point is simply that $V_\Omega\circ f$ is a subharmonic function 
on $\ol\D\setminus f^{-1}(H)$ that vanishes on $\bT$ and has 
a logarithmic pole with weight $m_j$ at every point $\zeta_j$ of the divisor
$f^{-1}(H)=\sum m_j \zeta_j$. Hence it is bounded above by the Green function 
on $\D$ with the same poles. Comparing the values at the origin 
gives the stated inequality.)

Assume now that $\Omega_1\supset \Omega_2\supset \cdots$ are 
open sets with $\cap_{j=1}^\infty \Omega_j=K$, and $f_j\colon\ol\D\to\P^n$
is a sequence of analytic discs with $f_j(0)=p$, $f_j(\bT)\subset \Omega_j$,
and $J(f_j)\le C<\infty$ for all $j\in\N$. Then
$V_{\Omega_j}(p) \le J(f_j)\le C$ for all $j$. As $j\to\infty$, the numbers
$V_{\Omega_j}(p)$ increase to $V_K(p)$, so we get $V_K(p)\le C<+\infty$.
Thus $p$ belongs to the projective hull of $K$.

Conversely, assume that $p\in \wh K_{\P^n}$; hence $V_K(p)<+\infty$.
Since $K$ is connected, we can choose a decreasing sequence 
of connected open neighborhoods $\Omega_j\supset K$
as above. Pick a decreasing sequence of numbers $\epsilon_j>0$ converging to zero.
By L\'arusson and Sigurdsson \cite{Larusson-Sigurdsson2005}
we have for every connected open set $\Omega\subset\C^n \subset\P^n$ that
\[
		V_\Omega(p) = \inf_{f} J(f),
\]
the infimum being taken over all discs in $\P^n$ with $f(0)=p$  
and $f(\bT)\subset\Omega$. Hence there exists  for every $j\in \N$ 
an analytic disc $f_j\colon\ol\D\to\P^n$ 
such that $f_j(0)=p$, $f_j(\bT)\subset \Omega_j$, and 
\[
	V_{\Omega_j}(p) \le J(f_j)< V_{\Omega_j}(p)+\epsilon_j.
\]
By the transversality theorem we may assume that each $f_j$ 
has simple poles, i.e., it intersects the hyperplane $H$ transversely.
As $j\to\infty$, the numbers $V_{\Omega_j}(p)$ increase monotonically to 
$V_K(p)$. It follows that the sequence $f_j$ 
satisfies properties (a), (b) and (c').
\end{proof}

\begin{remark}
At this point one can proceed as in \cite{Lawson-Wermer} to 
write $f_j=G_j/B_j$, where $G_j\colon\ol\D\to\C^n$ is a
holomorphic disc in $\C^n$ and $B_j$ is a 
finite Blaschke product whose zeros are precisely the poles
of $f_j$ (i.e., the points in $f^{-1}(H)$). The condition
$J(f_j)\le C$ implies that $|B_j(0)|\ge \E^{-C}>0$ for all $j$. 
Furthermore, on $\bT$ we have $|G_j|=|f_jB_j|=|f_j|$ 
which is uniformly bounded, and hence the sequence 
$|G_j|$ is uniformly bounded on $\ol\D$ by the maximum principle.
Passing to subsequences we may assume that the sequence 
$B_j$ converges uniformly on 
compacts in $\D$ to a nonzero Blaschke product $B$, 
and the sequence $G_j$ converges to a bounded holomorphic map $G\colon\D\to\C^n$. 
Lawson and Wermer then show that the holomorphic map 
$f=G/B\colon \D\to\P^n$ satisfies $f(\D)\subset \wh K_{\P^n}$.
However, since nothing in this argument 
prevents $f$ from being the constant map $f\equiv f(0)$,
the limit discs obtained in this way do not seem to give a 
satisfactory description of the projective hull. 
\qed \end{remark}

%
\section{Characterization of the projective hull of a compact 
connected set by analytic discs} 
\label{sec:universal}
In this section we obtain the following result which improves 
Theorem \ref{thm:main} in the case when the compact set 
$K\subset \P^n$ is also connected.

\begin{theorem}
\label{thm:main2}
Let $K$ be a compact connected set in $\P^n$.
A point $p\in \C^{n+1}\setminus \{0\}$ belongs to the polynomial
hull of the set $S_K \subset \C^{n+1}$  (\ref{eq:BK}), and hence 
$x=\pi(p) \in\P^n$ belongs to the projective hull of $K$,
if and only if there exists a sequence of analytic discs 
$F_j\colon \ol\D\to \C^{n+1}\setminus \{0\}$ such that 
\begin{equation}
\label{eq:Fj}
	F_j(0)=p \ (\forall j\in \N), \qquad 
	\lim_{j\to\infty} \max_{t\in[0,2\pi]} \dist(F_j(\E^{\I t}),S_K)=0.
\end{equation}
\end{theorem}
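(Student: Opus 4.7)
The plan for the ``if'' direction is a routine submean-value argument: for any $u\in\Psh(\C^{n+1})$ we have $u(p)=u(F_j(0))\le \int_0^{2\pi} u(F_j(\E^{\I t}))\,\d t/(2\pi)$; letting $j\to\infty$ and using upper semicontinuity of $u$ together with the uniform convergence of $F_j(\bT)$ to the compact set $S_K$ gives $u(p)\le \sup_{S_K} u$. Hence $p$ lies in the plurisubharmonic (equivalently polynomial, as $\C^{n+1}$ is Stein) hull of $S_K$, so $\pi(p)\in\wh K_{\P^n}$ by (\ref{eq:SK}).

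For the nontrivial direction, fix $p\in\wh{S_K}\setminus\{0\}$. First observe that $S_K$ is connected: the restriction $\pi|_{S_K}\colon S_K\to K$ is a circle bundle (a restriction of the Hopf fibration), so connectedness of $K$ implies that of $S_K$. Moreover $S_K$ sits inside the unit sphere $S\subset\C^{n+1}$, i.e.\ in the affine chart $\C^{n+1}\subset\P^{n+1}$, and $p\in\wh{S_K}$ forces $V_{S_K}(p)=0$. Applying Theorem \ref{thm:LW} to the pair $(S_K,p)$ in $\P^{n+1}$, I obtain a sequence of analytic discs $\tilde f_j\colon \ol\D\to \P^{n+1}$ with simple poles, satisfying $\tilde f_j(0)=p$, $\tilde f_j(\bT)\to S_K$ uniformly, and $J(\tilde f_j)\to 0$. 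Since $\{0\}\subset\C^{n+1}$ has real codimension $\ge 4$, a generic small perturbation (preserving the center $p$, uniform boundary proximity to $S_K$, simplicity of the poles, and a uniform bound on $J$) further guarantees $\tilde f_j(\ol\D)\cap \{0\}=\emptyset$.

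The central step is to turn each meromorphic $\tilde f_j$ into a holomorphic disc in $\C^{n+1}\setminus\{0\}$ with the same center and boundary still uniformly close to $S_K$. Let $g_j\colon\ol\D\to\ol\D$ be the finite Blaschke product with simple zeros at the poles $\zeta_{j,k}$ of $\tilde f_j$, normalized so that $g_j(0)>0$; then $|g_j|\equiv 1$ on $\bT$ and $g_j(0)=e^{-J(\tilde f_j)}\to 1$. Set $F_j:=g_j\,\tilde f_j$. The simple zero of $g_j$ cancels the simple pole of $\tilde f_j$ at each $\zeta_{j,k}$, so $F_j$ extends holomorphically there with $F_j(\zeta_{j,k})=g_j'(\zeta_{j,k})\cdot\mathrm{res}_{\zeta_{j,k}}\tilde f_j\ne 0$; at any other point, $F_j$ is a nonzero scalar multiple of $\tilde f_j$, which avoids $0$ by construction. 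Thus $F_j\colon \ol\D\to\C^{n+1}\setminus\{0\}$. The decisive fact is the $S^1$-invariance of $S_K$: on $\bT$, $|g_j(\zeta)|=1$ and $g_j(\zeta)\cdot S_K=S_K$, so whenever $|\tilde f_j(\zeta)-s|<\epsilon$ with $s\in S_K$, one has $|F_j(\zeta)-g_j(\zeta) s|=\epsilon$ with $g_j(\zeta) s\in S_K$, giving uniform convergence $F_j(\bT)\to S_K$. Finally, $\hat F_j:=F_j/g_j(0)$ satisfies $\hat F_j(0)=p$ exactly, still avoids $0$, and $\hat F_j(\bT)$ stays uniformly close to $(1/g_j(0))\,S_K$, which converges to $S_K$ since $g_j(0)\in\R_+$ tends to $1$.

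The main obstacle is the passage from meromorphic discs in $\P^{n+1}$ (produced by Theorem \ref{thm:LW}) to genuinely holomorphic discs in $\C^{n+1}\setminus\{0\}$: the discs $\tilde f_j$ really do pass through the hyperplane at infinity, and multiplying by an arbitrary Blaschke factor $g_j$ would in general scramble the boundary image. It is precisely the $S^1$-invariance of $S_K$ (inherited from its definition $S_K=S\cap\pi^{-1}(K)$) that makes the Blaschke multiplication harmless on $\bT$ and thereby unlocks the argument; a parallel statement for a non-circular compact set would require a different device.
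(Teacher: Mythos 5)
Your proof is correct and follows essentially the same route as the paper's: Lárusson--Sigurdsson discs with $J\to 0$ (you reach them via Theorem \ref{thm:LW} applied to the connected circular set $S_K\subset\C^{n+1}\subset\P^{n+1}$, the paper invokes the envelope formula directly on circular neighborhoods of $S_K$), Blaschke cancellation of the poles, circle-invariance of $S_K$ to keep the boundary close, and division by $B_j(0)\to 1$ to restore the center. The only blemishes are cosmetic: the displayed ``$=\epsilon$'' should be ``$<\epsilon$'', and the paper's version of the Blaschke step does not require simple poles.
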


\begin{remark}
By the maximum principle the images $F_j(\ol\D)$
are contained in balls $(1+\epsilon_j) \B$ with 
$\epsilon_j= \max_{t\in[0,2\pi]} \dist(F_j(\E^{\I t}),S_K)$.
However, it does not seem possible to control from below the 
distance of $F_j(\ol\D)$ to the origin. 
The projected sequence $f_j=\pi\circ F_j \colon \ol\D\to\P^n$
then clearly enjoys the bounded lifting property 
(see Def.\ \ref{def:BLP}) and satisfies 
\[
	f_j(0)=x \ (\forall j\in \N), \qquad 
	\lim_{j\to\infty} \max_{t\in[0,2\pi]} \dist(f_j(\E^{\I t}),K)=0.
\]
The advantage over the P-sequence found in Theorem \ref{thm:main} is
that the entire boundary circle $f_j(\bT)$
is mapped close to the set $K$, similarly to what happened in
Theorem \ref{thm:LW}. However, the set $K$ in Theorem \ref{thm:main2}
need not be contained in any affine chart of $\P^n$.
\qed\end{remark}

\begin{proof}
The existence of a sequence of discs satisfying condition (\ref{eq:Fj}) 
clearly implies that the point $p$ belongs to the polynomial hull of $S_K$.

Assume now that a  point $p\in \C^{n+1}\setminus \{0\}$ 
belongs to $\wh{S_K}$; we shall find a sequence $F_j$
satisfying (\ref{eq:Fj}). To this end we 
consider analytic discs as in the diagram (\ref{eq:diagram}). 
We compactify $\C^{n+1}$ by adding the hyperplane at infinity and obtain 
$\P^{n+1} = \C^{n+1} \cup H_\infty$. 
Given a set $\Omega\subset\C^{n+1}$
we recall that $V_\Omega$ is the Siciak-Zaharyuta extremal function with
logarithmic pole at $H_\infty$. Since the point $p$
belongs to the polynomial hull $\wh S_K$, we have $V_{S_K}(p)=0$.
Choose a decreasing sequence of connected
open sets $\Omega_j \subset \C^{n+1}$,  
\[
	\Omega_1\supset\Omega_2\supset \cdots \supset \cap_{j=1}^\infty \Omega_j=S_K,
\]
such that every $\Omega_j$ is circular
(invariant with respect to the circle action $(t,z) \mapsto \E^{\I t}z$). 
For every $j$ we pick a smaller circular neighborhood 
$\Omega'_j$ of the set $S_K$ and a number $\epsilon_j>0$ 
such that $\E^{\epsilon_j} z  \in \Omega_j$ for every $z\in \Omega'_j$.

Since $V_{S_K}(p)=0$, we have $V_{\Omega'_j}(p)=0$ for all $j$. 
By L\'arusson and Sigurdsson 
\cite{Larusson-Sigurdsson2005} there exists for every $j$ 
an analytic disc $G_j\colon\ol\D\to\P^{n+1}$ such that 
$G_j(0)=p$, $G_j(\bT)\subset \Omega'_j$, and 
$J(G_j) < \epsilon_j$. Here $J(G_j)=-\sum_k m_{j,k} \log|\zeta_{j,k}|$,  
where $\sum_k m_{j,k} \zeta_{j,k} = G_j^{-1}(H_\infty)$
is the intersection divisor of $G_j$  with the 
hyperplane $H_\infty$. By general position we may assume that $G_j(\ol\D)$ 
does not contain the origin $0\in\C^{n+1}$ for any $j$.

Let $B_j(\zeta)$ denote the Blaschke product with the 
zeros $\zeta_{j,k}$ of multiplicity $m_{j,k}$. 
A calculation gives  
\[
	|B_j(0)| = \E^{-J(G_j)} > \E^{-\epsilon_j}.
\] 
Define new analytic discs by 
\[
	F_j(\zeta) = \frac{B_j(\zeta)}{B_j(0)} \, G_j(\zeta),
	\qquad  j=1,2,\ldots.
\]
Then $F_j$ is an analytic disc in $\C^{n+1}\setminus\{0\}$ 
(since the poles of $G_j$ are exactly cancelled off by the zeros 
of $B_j$ and no additional zeros appear), 
and $F_j(0)=G_j(0)=p$. Since $|B_j|=1$ on $\bT$ and the sets 
$\Omega'_j\subset \Omega_j$ are circular, our choice of the number
$\epsilon_j$ implies that $F_j(\bT) \subset \Omega_j$.
Hence the sequence $F_j$ satisfies the stated properties.
\end{proof}

The above proof does not use any special hypothesis
of the set $S_K$ other that it is connected and circular.
Hence we get the following result of possible independent interest.
It vaguely resembles the description of the polynomial 
hulls of sets in $\C^2$ fibered over the unit circle, 
with disc fibers, due to Alexander and Wermer \cite{AW}.

%
%
%
%
\begin{theorem}
\label{thm:hull-circular}
Assume that $K$ is a compact connected set in $\C^n$ which is
invariant with respect to the circle action $(t,z)\mapsto \E^{\I t}z$.
Then a point $p\in \C^n$ belongs to the polynomial
hull of $K$ if and only if there exists a sequence of analytic discs 
$f_j\colon \ol\D\to \C^{n}$ such that 
\[	f_j(0)=p \ (\forall j\in \N), \qquad 
	\lim_{j\to\infty} \max_{t\in[0,2\pi]} \dist(f_j(\E^{\I t}),K)=0.
\]
\end{theorem}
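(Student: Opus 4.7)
The plan is to mimic the proof of Theorem \ref{thm:main2} essentially verbatim, since the hypotheses on $S_K\subset\C^{n+1}$ used there (connected and circular) match the hypotheses on $K\subset\C^n$ here, with the simplification that we no longer need to avoid the origin. The necessity direction is immediate: any sequence $f_j$ with $f_j(0)=p$ and $f_j(\bT)$ converging uniformly to $K$ is automatically a P-sequence for $(K,p)$ in the sense of Definition \ref{def:P-seq} (the supports are bounded by the maximum principle), so Theorem \ref{thm:Poletsky} applied to $X=\C^n$ yields $p\in\wh K_{\Psh(\C^n)}=\wh K$.

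For sufficiency, I would compactify $\C^n$ to $\P^n=\C^n\cup H_\infty$ and use the fact that $p\in\wh K$ is equivalent to $V_K(p)=0$, where $V_K$ is the Siciak-Zaharyuta extremal function with logarithmic pole at $H_\infty$. Choose a decreasing sequence of connected circular open neighborhoods $\Omega_1\supset\Omega_2\supset\cdots$ of $K$ with $\bigcap_j\Omega_j=K$, together with smaller circular neighborhoods $\Omega'_j\supset K$ and numbers $\epsilon_j>0$ such that $\lambda\Omega'_j\subset\Omega_j$ whenever $|\lambda|\le\E^{\epsilon_j}$. This construction is routine given that $K$ is bounded and both families of sets are circularly invariant.

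Next, since $V_{\Omega'_j}(p)=0$, the L\'arusson-Sigurdsson disc formula \cite{Larusson-Sigurdsson2005} furnishes an analytic disc $G_j\colon\ol\D\to\P^n$ with $G_j(0)=p$, $G_j(\bT)\subset\Omega'_j$, and $J(G_j)<\epsilon_j$. Let $B_j$ be the finite Blaschke product whose zero divisor equals the intersection divisor $G_j^{-1}(H_\infty)$; then $|B_j(0)|=\E^{-J(G_j)}>\E^{-\epsilon_j}$. Define
\[
	f_j(\zeta):=\frac{B_j(\zeta)}{B_j(0)}\,G_j(\zeta).
\]
The zeros of $B_j$ exactly cancel the poles of $G_j$, so each $f_j$ is an analytic disc in $\C^n$ with $f_j(0)=G_j(0)=p$. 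On $\bT$ we have $|B_j(\zeta)/B_j(0)|=1/|B_j(0)|<\E^{\epsilon_j}$; combining the choice of $\epsilon_j$ with the circular invariance of $\Omega'_j$ therefore forces $f_j(\bT)\subset\Omega_j$, and so $\max_t\dist(f_j(\E^{\I t}),K)\to 0$ as $j\to\infty$.

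The only nontrivial ingredient is the Blaschke cancellation, which was already carried out in the proof of Theorem \ref{thm:main2}; no new obstacle arises, because neither the L\'arusson-Sigurdsson envelope formula nor the multiplication trick uses any feature of $S_K$ beyond its being connected and circular. I would therefore present this theorem essentially as a corollary of (the proof of) Theorem \ref{thm:main2} rather than as an independent argument.
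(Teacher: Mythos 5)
Your proof is correct and is precisely the route the paper takes: the paper gives no separate argument for this theorem but derives it by observing that the proof of Theorem \ref{thm:main2} uses nothing about $S_K$ beyond its being connected and circular, which is exactly your reduction. (One cosmetic point: the dilation condition you actually need is $\lambda\Omega'_j\subset\Omega_j$ for $1\le|\lambda|\le\E^{\epsilon_j}$, not for all $|\lambda|\le\E^{\epsilon_j}$, since the relevant factor $1/|B_j(0)|$ is always $>1$; as stated your condition could fail for small $|\lambda|$ when $0\notin\Omega_j$.)
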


Theorem \ref{thm:hull-circular} fails in general for a disconnected circular set $K$. 
A simple example for which the conclusion fails is the union $K=T_1\cup T_2$ 
of two disjoint totally real tori $T_1,T_2$ in the unit sphere of $\C^2$
such that $K$ bounds an embedded complex annulus $A\subset \C^2$,
with the two boundary circles of $A$ contained in different connected 
components of $K$. However, we do not know the answer to the following
question.

\begin{problem} Does Theorem \ref{thm:hull-circular} still hold
if the set $K$ is not circular? 
\end{problem}

\subsection*{Acknowledgement}
We wish to thank Norman Levenberg
and Finnur L\'aru\-sson for their remarks
on an earlier version of the paper. The question whether
the projective hull can be characterized by sequences of 
analytic discs was communicated to us by Levenberg.


\end{document}